\newcounter{theoremcounter}
\newcounter{lemmacounter}
\newcounter{dummycounter}
\newcounter{propcounter}
\newcounter{name}
\newcounter{quescounter}
\newcounter{emptycounter}
\newtheorem{theorem}[theoremcounter]{Theorem}
\newtheorem{principle}[name]{Principle}
\newtheorem{question}[quescounter]{Question}
\newtheorem{lemma}[lemmacounter]{Lemma}
\newtheorem{proposition}[propcounter]{Proposition}
\newcounter{eqncounter}
\numberwithin{equation}{eqncounter}
\def\IQ{\mathbb Q}
\newcommand{\Mod}[1]{\ (\mathrm{mod}\ #1)}
\def\Qbar{\overline{\IQ}}
\def\c{c}
\def\cone{c_1}
\def\ctwo{c_2}
\def\cthree{c_3}
\def\Lin{L}
\def\Gal{\mathop{{\rm Gal}}\nolimits}
\title{Small generators of abelian number fields}
\begin{document}

\author{Martin Widmer}

\address{Department of Mathematics\\ 
Royal Holloway, University of London\\ 
TW20 0EX Egham\\ 
UK}

\email{martin.widmer@rhul.ac.uk}

\date{\today}

\subjclass[2010]{Primary 11R04; 11G50   Secondary 11R29; 11R06}

\keywords{Weil height, small generators, Ruppert's conjecture, abelian number fields, Linnik's Theorem}

\maketitle

\begin{abstract}
We show that for each abelian number field $K$ of sufficiently large degree $d$ there exists an element $\alpha\in K$ with 
$K=\IQ(\alpha)$ and absolute Weil height $H(\alpha)\ll_d |\Delta_K|^{1/2d}$ , where $\Delta_K$ denotes the discriminant of $K$.
This answers a question of Ruppert from 1998 in the case of abelian extensions of sufficiently large degree.
We also show that the exponent $1/2d$ is best-possible when $d$ is even.    
\end{abstract}

\section{Introduction}
In this article we answer Ruppert's question on small generators of number fields for abelian fields of sufficiently large degree.
We also show that Ruppert's proposed bound is sharp when the degree is even.

Let $K$ be a number field, and for $\alpha\in K$ let 
\begin{alignat*}1
H(\alpha)=\prod_{v\in M_K}\max\{1,|\alpha|_v\}^{\frac{d_v}{[K:\IQ]}}
\end{alignat*}
be the absolute multiplicative Weil height of $\alpha$. Here $M_K$ denotes the set of places of $K$ and 
for each place $v$ we choose the unique representative $|\cdot|_v$ that either extends the usual Archimedean absolute value
on $\IQ$ or a usual $p$-adic absolute value on $\IQ$, and $d_v=[K_v:\IQ_v]$ denotes the local degree at $v$. By the product formula 
$H(\alpha)$ is independent of the number field $K$ containing $\alpha$, and thus
$H(\cdot)$ is a well-defined function on the algebraic numbers $\Qbar$.
For an extension $K/F$ of number fields we write 
\begin{alignat*}1
\delta(K/F)=\min \{H(\alpha); K=F(\alpha)\}.
\end{alignat*}
Northcott's Theorem implies that the above minimum exists, and thus  $\delta(K/F)$ is well-defined.
We write $\Delta_K$ for the discriminant of $K$. 
In 1998 Ruppert \cite[Question 2]{Rupp} proposed the following question.
\begin{question}\label{quest: Ruppert1}[Ruppert, 1998]
Let $d>1$ be an integer. Is it true that
\begin{alignat}1\label{ineq: Ruppert1} 
\delta(K/\IQ)\ll_d|\Delta_K|^{\frac{1}{2d}}
\end{alignat}
for every number field $K$ of degree $d$?
\end{question} 

Ruppert showed that for $d=2$ the answer is yes. 
In \cite{VaalerWidmer} an affirmative answer to Ruppert's question  was  given for all
number fields $K$ that have a real embedding, confirming (\ref{ineq: Ruppert1}) for all odd $d$.
In the same article it was also shown that (\ref{ineq: Ruppert1}) holds, provided the Riemann-hypothesis
for the Dedekind-zeta function of the Galois closure $K^{(G)}$ of $K/\IQ$ holds true. 


Pierce, Turnage-Butterbaugh and Wood \cite[Theorem 8.2]{PTW} have  shown that   (\ref{ineq: Ruppert1}) 
holds for $100\%$ (when enumerated by modulus of their discriminant) of all number fields in various families of degree $d$-extensions with prescribed Galois group and certain conditions on the  ramification types\footnote{Some of their results are conditional on the strong Artin conjecture  (but not  the Riemann-hypothesis for the corresponding Dedekind-zeta function).}.

Very recently  Akhtari, Vaaler,
and the author 
\cite[Theorem 1.3]{AkhtariVaalerWidmer2023}  showed that   (\ref{ineq: Ruppert1}) 
can only fail for  totally complex fields $K$ that are Galois over their maximal totally real subfield $F$.
Hence, all these exceptional fields do have a subfield of index $2$, namely the fixed field of the group generated by complex conjugation. 
This implies that, apart from the  families of cyclic number fields \cite[Theorem 3.3, (1)]{PTW},  the  unconditional parts of the ``almost all'' result \cite[Theorem 8.2]{PTW}  
are superseded\footnote{The families of even degree considered in \cite[Theorem 8.2]{PTW}  are (certain) $d$-extensions $K$ with  $\Gal(K^{(G)}/\IQ)$ isomorphic to either $A_d$, $S_d$,  a cyclic group,
or a simple group (the latter being conditional on the strong Artin conjecture). 
Each subgroup of $A_d$ or $S_d$ of index $d$ is some $A_{d-1}$ or $S_{d-1}$ respectively, and 
there is neither  a proper intermediate group between $A_{d-1}$ and $A_{d}$ nor between $S_{d-1}$ and $S_{d}$. Hence, these fields $K$ cannot have a subfield of index $2$
if $d>2$.}
by the recent  pointwise result  \cite[Theorem 1.3]{AkhtariVaalerWidmer2023}.
 
Regarding cyclic, and more generally abelian, number fields, we have the following pointwise result.
\begin{theorem}\label{Thm:2}
Let $\Lin$ be the effectively computable constant from (\ref{def:nuexplicit}), and suppose $d\geq 4\Lin$.
Then, apart from finitely many exceptional fields, we have  
$$\delta(K/\IQ)\leq (25|\Delta_K|)^{\frac{1}{2d}}$$
for each  abelian number field $K$ of degree $d$. 
\end{theorem}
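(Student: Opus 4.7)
The plan is to combine Kronecker--Weber (to realise $K$ inside a cyclotomic field), the conductor--discriminant formula (to pass between the conductor $f$ of $K$ and $|\Delta_K|$), and Linnik's theorem on primes in arithmetic progressions --- which is precisely what the effective constant $\Lin$ quantifies --- to produce a small generator.

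By Kronecker--Weber, $K\subseteq\IQ(\zeta_f)$ for $f$ the conductor of $K/\IQ$, and $\Gal(K/\IQ)\cong(\IZ/f\IZ)^\times/H$ for some subgroup $H$ of index $d$. The conductor--discriminant formula reads $|\Delta_K|=\prod_{\chi}f(\chi)$, where $\chi$ runs over the $d$ characters of $\Gal(K/\IQ)$, giving both an upper bound for $f$ and a lower bound for $|\Delta_K|$ in terms of $f$ and $d$. The first candidate generator is a Gauss sum: for a faithful character $\chi$ of $\Gal(K/\IQ)$, $g(\chi)=\sum_{a\bmod f(\chi)}\chi(a)\zeta_{f(\chi)}^a$ lies in $K$, generates $K/\IQ$ when $\chi$ has order $d$, and has absolute value $\sqrt{f(\chi)}$ at every archimedean place, so $H(g(\chi))\leq \sqrt{f(\chi)}$. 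Comparing with $(25|\Delta_K|)^{1/2d}$, this beats the target once a faithful character of sufficiently small conductor is available.

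To guarantee a faithful character of small conductor (and in the non-cyclic case a usable rational combination of Gauss sums attached to generators of the character group), Linnik's theorem is applied: for a prescribed residue class modulo $f$ it produces a prime $p\leq Cf^{\Lin}$ in that class, from which one extracts either a character of $\Gal(K/\IQ)$ of small conductor or a prime of small norm with prescribed Frobenius, usable in place of or in combination with the Gauss sum. Combining the Gauss-sum height bound $H(g(\chi))\leq\sqrt{f(\chi)}$ with the Linnik bound on $f(\chi)$ and the conductor--discriminant lower bound on $|\Delta_K|$ then yields the desired inequality for all $d\geq 4\Lin$; the ``finitely many exceptional fields'' absorb the small-discriminant cases where the asymptotic estimates degenerate, and the explicit constant $25$ comes from tracking numerical factors in these comparisons.

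The main obstacle is pushing the Gauss-sum bound $\sqrt{f(\chi)}$ well below $\sqrt{f}$: a priori the conductor of a faithful character can be as large as $f$ itself, so the raw Gauss-sum construction only reaches $|\Delta_K|^{1/2d}$ up to a loss of order $f^{1/2d}$. Absorbing this loss is where Linnik's exponent enters, bounding the smallest usable character conductor (or the smallest auxiliary prime) by $f^{\Lin}$; the threshold $d\geq 4\Lin$ is calibrated exactly so that, against the discriminant exponent $1/2d$ and the Gauss-sum exponent $1/2$, this Linnik contribution is dominated and the inequality $H(\alpha)\leq(25|\Delta_K|)^{1/2d}$ can be closed uniformly in $K$.
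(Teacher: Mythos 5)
Your sketch does not follow the paper's route and, more importantly, does not close into a proof. The paper's argument rests on two specific ingredients that your proposal never surfaces: (i) \cite[Theorem 4.1]{VaalerWidmer}, which says that a prime $p>|\Delta_K|^{1/2}$ that \emph{splits completely} in $K$ yields a generator of $K/\IQ$ of height at most $p^{1/d}$; and (ii) a short-interval Linnik-type statement (Lemma~\ref{lemm:piposbound}) giving a prime $p\equiv 1\pmod{\mathfrak f}$ in an interval $x<p\leq 5x$ for $x\geq\mathfrak f^{L}$. Taking $x=|\Delta_K|^{1/2}$ and using $\mathfrak f\leq|\Delta_K|^{2/d}$ with $d\geq 4L$ then gives a split prime in $(|\Delta_K|^{1/2},\,5|\Delta_K|^{1/2}]$, hence a generator of height at most $(25|\Delta_K|)^{1/2d}$. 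Crucially, one needs a \emph{lower} bound $p>|\Delta_K|^{1/2}$ on the split prime, which is why the classical ``smallest prime in a progression'' form of Linnik is insufficient and a short-interval variant is required; your proposal never addresses this lower-bound constraint.

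Your Gauss-sum construction has two concrete defects. First, the claim that $g(\chi)=\sum_a\chi(a)\zeta_{f(\chi)}^a$ ``lies in $K$'' is false in general: $g(\chi)$ lies in $\IQ(\zeta_{f(\chi)},\zeta_{\mathrm{ord}\,\chi})$, and already for a cyclic cubic field the Gauss sum of a generating character is not in $K$ (it lives in $K(\zeta_3)$). Second, and more fundamentally, a faithful character of a cyclic group $\Gal(K/\IQ)$ necessarily has conductor equal to the conductor $\mathfrak f$ of $K$; there is no ``faithful character of small conductor'' to be found by Linnik or otherwise. Since $\mathfrak f$ can be much larger than $|\Delta_K|^{1/d}$ (the geometric mean of the $f(\chi)$), the bound $H(g(\chi))\leq\sqrt{f(\chi)}=\sqrt{\mathfrak f}$ cannot be pushed down to $|\Delta_K|^{1/2d}$. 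The alternative branch you mention --- ``a prime of small norm with prescribed Frobenius'' --- gestures toward the correct idea, but is not developed, and in any case only reproduces the upper bound on a split prime without the needed lower bound $p>|\Delta_K|^{1/2}$; without that, there is no mechanism to convert a split prime into a generator with the stated height bound.
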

The proof is a combination of well-known facts, classical tools, and  \cite[Theorem 1.4]{VaalerWidmer}, 
 which yields (\ref{ineq: Ruppert1}) provided there exists  a product of distinct degree one primes of size between  $|\Delta_K|^{1/2}$ and $\gamma |\Delta_K|^{1/2}$
 (for some $\gamma=\gamma_d>1$).

For abelian number fields a sufficient condition for the splitting of a prime $p$ can be given in terms of a  congruence condition 
modulo the conductor $\frak{f}$ of $K$. The latter is related to the discriminant by Hasse's conductor-discriminant formula, and one can show that $\mathfrak{f}\leq |\Delta_K|^{2/d}$. 
Linnik's Theorem provides a prime $p$ 
in the required residue class  modulo $\mathfrak{f}$ and satisfying $p\leq \mathfrak{f}^L$, where $L$ is an absolute constant (usually called Linnik's constant).
This upper bound is sufficient as long as $d\geq 4L$.
We also need $p>|\Delta_K|^{1/2}$ but a  minor adjustment  to the proof of Linnik's Theorem  allows one to take care of this
additional constraint.

For abelian fields of small degree better bounds for the least splitting prime $p$ were obtained by
Pollack \cite{Pollack2014} (see also  \cite{GeMilinovichPollack}  for an extension to certain non-abelian number fields).
The proof is based on the observation that 
a lower bound on the number of  ideals  of bounded norm  furnishes an upper bound on the smallest splitting prime.
Unfortunately, it is unclear how to use this method to establish a splitting prime $p$ in the required interval
$|\Delta_K|^{1/2}< p\ll_d |\Delta_K|^{1/2}$.

The constant $\Lin$ in Theorem \ref{Thm:2} is expressed in terms of the constants $\c,\cone,\ctwo,\cthree$ that 
arise from the three fundamental principles Linnik's Theorem is based upon (cf. Section \ref{sec: primes}).
These four constants are all explicitly computable and are subject to further improvement. However, getting good values requires rather tedious computations,
which come at the expense of clarity and simplicity of the proof. But even the strongest methods (as developed in \cite{Heath-Brown1991}
and \cite{Xylouris2011}) would at very best lead to the bound $\Lin=5$, leaving the cases $2<d<20$ still open. Therefore, we have not attempted to present explicit values for 
$\c,\cone,\ctwo$, and $\cthree$.

It is also worthwhile mentioning that the bound (\ref{ineq: Ruppert1})
is stable under taking composite fields with coprime discriminants.
To see this let 
$\alpha_i\in K_i$ with 
$$\delta(K_i/\IQ)=H(\alpha_i)\ll_{d_i}|\Delta_{K_i}|^{\frac{1}{2d_i}} \quad (1\leq i \leq 2).$$ 
There exists a generator $\alpha=m_1\alpha_1+m_2\alpha_2$ of $K=K_1K_2$ with rational integers $0\leq m_i<d$ (see, e.g., \cite[Lemma 3.3]{art1}).  If $(\Delta_{K_1},\Delta_{K_2})=1$ then it follows (cf. \cite[Theorem 4.9]{Narkiewiczbook}) that
$d=[K:\IQ]=d_1d_2$ and\footnote{In fact, $d=d_1d_2$ implies that already  $\alpha=\alpha_1+\alpha_2$ is a generator of $K$, as was shown
by  Isaac (cf. \cite[Theorem 1.3]{Weintraub2011}).} 
$$|\Delta_K|=|\Delta_{K_1}^{d_2}\Delta_{K_2}^{d_1}|,$$ and thus 
$$\delta(K/\IQ)\leq H(\alpha)\ll_ {d}|\Delta_{K_1}|^{\frac{1}{2d_1}}|\Delta_{K_2}|^{\frac{1}{2d_2}}=|\Delta_{K}|^{\frac{1}{2d}}.$$

Can the exponent $1/2d$ in (\ref{ineq: Ruppert1}) be improved?
A  well-known result of  Silverman \cite[Theorem 1.1]{Silverman} implies that 
\begin{alignat}1\label{ineq: Silverman} 
\delta(K/\IQ)\gg_d|\Delta_K|^{\frac{1}{2d(d-1)}}.
\end{alignat}
Ruppert's \cite[Theorem 1.1]{Rupp} shows that for $d=2$ the reversed inequality holds true as well. 
This suggests the following strengthening of Question \ref{ineq: Ruppert1}, also proposed
by Ruppert \cite[Question 1]{Rupp}.
\begin{question}\label{quest: Ruppert2}[Ruppert, 1998]
Let $d>1$ be an integer. Is it true that
\begin{alignat}1\label{ineq: Ruppert2}
\delta(K/\IQ)\ll_d|\Delta_K|^{\frac{1}{2d(d-1)}}
\end{alignat}
for every number field $K$ of degree $d$?
\end{question}

However, Vaaler and the  author  \cite[Theorem 1.2]{VaalerWidmer2} have shown that there exist infinitely many number fields $K$ of degree $d$ with 
\begin{alignat}1\label{ineq: 0}
\delta(K/\IQ)\geq |\Delta_K|^{\gamma}
\end{alignat}
whenever 
\begin{alignat}1\label{ineq: 1}
\gamma<\left\{
    \begin{array}{lll}
      &1/((b+1)d):&\text{ if }b\leq 3,\\
      &{1}/{(2(b+1)d)}+{1}/{(b^2(b+1)d)}:&\text{ otherwise.}
\end{array} \right.
\end{alignat}
Here $b=b(d)>1$ denotes the smallest 
divisor of $d$.  
This provides a negative answer to Question \ref{quest: Ruppert2} when $d$ is composite.

A negative answer was also given \cite[Theorem 1.3]{VaalerWidmer2} for prime $d>3$, conditional on a weak form of a folklore conjecture about the distribution of number fields. 
Very recently Dubickas  \cite[Theorem 1]{Dubickas22} showed unconditionally that for odd $d$ one can take 
\begin{alignat}1\label{ineq: Dubickas}
\gamma=\frac{d+1}{2d^2(d-1)},
\end{alignat}
hence showing that Ruppert's Question \ref{quest: Ruppert2} has a negative answer also for prime $d>2$. While the bound
(\ref{ineq: 1}) is better for composite $d$ than (\ref{ineq: Dubickas}) Dubickas provides an explicit family
of fields whereas there is no such description for the fields in \cite[Theorem 1.2 and Corollary 4.1]{VaalerWidmer2}.

Our next result  shows that for even $d$ the exponent $1/2d$ in  
(\ref{ineq: Ruppert1}) cannot be replaced  by any smaller value. Furthermore, it
improves the exponents $\gamma$ in (\ref{ineq: 1}) and (\ref{ineq: Dubickas}) when $d$ is composite and $b\neq 3$,
and it provides an explicit description of the fields $K$. 
On the other hand the method in \cite{VaalerWidmer2} (cf.  \cite[Corollary 4.2]{VaalerWidmer2}) also  provides density statements which we do not recover here. 
\begin{theorem}\label{Thm:1}
Let $m$ and $n>1$ be positive integers, and set $d=mn$. 
Then there are infinitely many number fields $K$ of degree $d$ such that
\begin{alignat*}1
{\frac{1}{\sqrt{2d}}}|\Delta_K|^{\frac{1}{2d(n-1)}}\leq \delta(K/\IQ)\leq (2d)^2|\Delta_K|^{\frac{1}{2d(n-1)}}.
\end{alignat*}
More precisely, the above bounds hold for each $K=\IQ(2^{1/m}, (p/q)^{1/n})$ where $p$ and $q$ are primes satisfying $m<p<q<2p$.
\end{theorem}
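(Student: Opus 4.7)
The strategy matches an upper bound, via an explicit small generator of $K$, with a lower bound coming from a relative Silverman-type inequality over the subfield $F = \IQ(2^{1/m})$. First, $[K:\IQ] = mn = d$: we have $[F:\IQ] = m$ by Eisenstein at $2$, and $[K:F] = n$ follows from Capelli--Vahlen applied to $x^n - p/q$ over $F$. Since $p > m$, the prime $p$ is unramified in $F/\IQ$, so for any prime $\mathfrak{p}$ of $F$ above $p$ we have $v_\mathfrak{p}(p/q) = 1$; this rules out $p/q \in F^r$ for each prime $r \mid n$ (and likewise $-4p/q \notin F^4$ when $4 \mid n$). By the Isaacs--Weintraub fact cited in the introduction, the element $\alpha = 2^{1/m} + (p/q)^{1/n}$ then generates $K$.

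\textbf{Upper bound.} From the primitive minimal polynomials one reads $H(2^{1/m}) = 2^{1/m}$ and $H((p/q)^{1/n}) = q^{1/n}$ (for the latter, $qx^n - p$ has leading coefficient $q$ and roots of modulus $(p/q)^{1/n} < 1$). Subadditivity gives $H(\alpha) \leq 4 q^{1/n}$. Both $p$ and $q$ are totally tamely ramified of index $n$ in $K/\IQ$ (tameness requires $p, q > n$, excluding only finitely many fields in the family), each contributing $p^{m(n-1)}$ respectively $q^{m(n-1)}$ to $|\Delta_K|$, so $|\Delta_K| \geq (pq)^{m(n-1)} \geq (q^2/2)^{m(n-1)}$ using $p > q/2$. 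Hence $|\Delta_K|^{\frac{1}{2d(n-1)}} \geq q^{1/n}/\sqrt{2}$ and
\begin{equation*}
H(\alpha) \leq 4\sqrt{2}\, |\Delta_K|^{\frac{1}{2d(n-1)}} \leq (2d)^2 |\Delta_K|^{\frac{1}{2d(n-1)}},
\end{equation*}
since $d \geq 2$.

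\textbf{Lower bound.} Let $\gamma \in K$ satisfy $F(\gamma) = K$, let $f \in F[x]$ denote its monic minimal polynomial (of degree $n$), and let $\gamma^{(1)}, \ldots, \gamma^{(n)}$ be its roots. Using $H(\gamma^{(i)} - \gamma^{(j)}) \leq 2 H(\gamma)^2$ and $|N_{F/\IQ}(y)| \leq H(y)^m$ for $y \in F^\times$, I obtain
\begin{equation*}
|N_{F/\IQ}(\mathrm{disc}(f))| \leq H(\mathrm{disc}(f))^m \leq 2^{d(n-1)} H(\gamma)^{2d(n-1)}.
\end{equation*}
For $\gamma \in \mathcal{O}_K$, the standard divisibility $\Delta_{K/F} \mid \mathrm{disc}(f)\, \mathcal{O}_F$, together with the tower formula $|\Delta_K| = |\Delta_F|^n \cdot N_{F/\IQ}(\Delta_{K/F})$ and the bound $|\Delta_F| \leq (2m)^m$, gives
\begin{equation*}
H(\gamma) \geq \frac{1}{\sqrt{2}} \left(\frac{|\Delta_K|}{|\Delta_F|^n}\right)^{\!\frac{1}{2d(n-1)}} \geq \frac{1}{\sqrt{2d}}\, |\Delta_K|^{\frac{1}{2d(n-1)}},
\end{equation*}
where the last inequality reduces to $(2m)^m \leq d^{m(n-1)}$ (equivalently $2m \leq d^{n-1}$), holding with equality when $n = 2$ and easily otherwise. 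The chief obstacle is extending this to non-integer $\gamma$: one must pass to $c\gamma \in \mathcal{O}_K$ for a suitable $c \in F^\times$ clearing the denominators of $\gamma$, and control $H(c)$ via the product formula so as to preserve the exponent $\frac{1}{2d(n-1)}$.
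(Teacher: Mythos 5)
Your upper bound argument is sound and essentially mirrors the paper's: you produce an explicit generator and compare $H$ against the lower bound $(pq)^{m(n-1)}\mid\Delta_K$. The one genuine difference is cosmetic: you invoke the Isaacs--Weintraub fact directly to take $\alpha = 2^{1/m}+(p/q)^{1/n}$, whereas the paper's Proposition applies to a general subfield $F$ and therefore cites a lemma producing a generator $a\alpha_1+b\alpha_2$ with $0\leq a,b<d$; since here $[K:\IQ]=mn$ exactly, your shortcut is legitimate. A side remark: your tameness proviso ``$p,q>n$, excluding finitely many fields'' is unnecessary --- wild ramification only increases the exponent of $p$ or $q$ in $\Delta_K$, so $(pq)^{m(n-1)}\mid\Delta_K$ holds unconditionally (as does Ruppert's $(pq)^{n-1}\mid\Delta_M$, which the paper uses together with $\Delta_M^m\mid\Delta_K$).

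The gap you flag at the end of the lower bound is real, and it is precisely the point at which the paper does something different: it simply cites Silverman's relative inequality, namely $H(\gamma)\geq n^{-1/(2(n-1))}N_{F/\IQ}(D_{K/F})^{1/(2d(n-1))}$ for any $\gamma$ with $F(\gamma)=K$ (\cite[Theorem 2]{Silverman}), combined with $N_{F/\IQ}(D_{K/F})=|\Delta_F|^{-n}|\Delta_K|$. Your derivation uses $\Delta_{K/F}\mid \operatorname{disc}(f)\,\mathcal{O}_F$, which is only available for $\gamma\in\mathcal{O}_K$. The obvious fix --- replace $\gamma$ by $c\gamma\in\mathcal{O}_K$ --- fails for the reason you suspect: $\operatorname{disc}$ of the minimal polynomial of $c\gamma$ picks up a factor $c^{n(n-1)}$, hence $c^{d(n-1)}$ after taking $N_{F/\IQ}$, and $H(c\gamma)$ can be as large as $|c|H(\gamma)$, so the factor $c$ is not cancelled. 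Silverman's proof is not a formal reduction to the integral case; it tracks the non-archimedean contributions to $H(\gamma)$ and to the different of $K/F$ simultaneously, place by place, and that bookkeeping is the substance of his theorem. To close your argument, cite Silverman's Theorem 2 as the paper does; your remaining steps (including the verification that $|\Delta_F|\leq(2m)^m$ turns Silverman's constant into $1/\sqrt{2d}$ since $2m\leq d^{n-1}$ for $n\geq 2$) are then correct.
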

Theorem \ref{Thm:1} is a simple consequence of an observation of Ruppert (and independently of Masser), and a standard height inequality of Silverman.\\


In \cite{VaalerWidmer2} it was proposed to determine the cluster points\footnote{ What the authors in \cite{VaalerWidmer2} meant by ``cluster point of the set $\left\{\frac{\log\delta(K/\IQ)}{\log|\Delta_K|}; [K:\IQ]=d\right \}$'' is a real number $\eta$ such that for each $\varepsilon>0$ there are infinitely 
many number fields $K$ of degree $d$ with $\left|\frac{\log\delta(K/\IQ)}{\log|\Delta_K|}-\eta\right|<\varepsilon$.} of the set
\begin{alignat*}1
\left\{\frac{\log\delta(K/\IQ)}{\log|\Delta_K|}; [K:\IQ]=d\right \}.
\end{alignat*}
Ruppert  \cite{Rupp} has shown that $1/(2d(d-1))$ is the smallest cluster point, and
for $d=3$ Dubickas has recently shown that  $1/9$ is a cluster point. Theorem \ref{Thm:1} shows that  $1/2d(n-1)$ is a cluster point for every divisor $n>1$ of $d$.


\section*{Acknowledgements}
It is my pleasure to thank  Glyn Harman, for  pointing  out that a lower bound as in Lemma \ref{lemm:piposbound}
can be derived by an easy adaptation of the usual proof of Linnik's theorem,  and  to
Marc Technau, for showing that Proposition \ref{Prop:Linnik} follows easily from  \cite[Proposition 18.5]{IW04}. 
I am also grateful to  Shabnam Akhtari and Jeffrey Vaaler for
many fruitful and interesting discussions on Ruppert's questions and beyond.

\section{Primes of  a given residue class in short intervals}\label{sec: primes}
For $x\geq 1$ and coprime positive integers $a$ and $q$ let

\begin{alignat*}1
\pi(x;q,a)&=\sum_{p\leq x\atop p\equiv a \Mod{q}}1,\\
\psi(x;q,a)&=\sum_{n\leq x\atop n\equiv a \Mod{q}}\Lambda(n),
\end{alignat*}
where in the first sum $p$ denotes a prime, and in the second sum $\Lambda(\cdot)$ denotes the von Mangoldt function.
Noting that 
$$\psi(x;q,a)=\sum_{p\leq x\atop p\equiv a \Mod{q}}\left\lfloor\frac{\log x}{\log p}\right\rfloor \log p,$$ 
and splitting the sum in $p\leq \sqrt{x}$ and $p>\sqrt{x}$ gives the inequalities 
\begin{alignat*}1
\frac{1}{2}\log x\left(\pi(x;q,a)-\pi(\sqrt{x};q,a)\right) \leq \psi(x;q,a)\leq \log x\left(\pi(x;q,a) +\pi(\sqrt{x};q,a)\right).
\end{alignat*}
This in turn yields for $y>x\geq 2$
\begin{alignat}1\label{ineq: pivspsi}
\pi(y;q,a)-\pi(x;q,a)\geq \frac{\psi(y;q,a)}{\log y}-\frac{2\psi(x;q,a)}{\log x}+O\left(\frac{\sqrt{y}}{\log y}\right),
\end{alignat}
where the implicit  constant in the $O$-term is absolute.

As explained in Section \ref{sec: proofThm2}, to prove Theorem \ref{Thm:2} it suffices to prove a positive lower bound for 
$$\pi(y;q,a)-\pi(x;q,a),$$
when $y=\gamma x$ for some absolute $\gamma>1$ and $x$ is of  size $q^{d/4}$. 

Such a lower bound can be deduced by an easy adaptation of the usual proof of Linnik's theorem, provided $d$ is sufficiently big.
We follow Iwaniec-Kowalski \cite{IW04} which itself  borrows significantly from 
Graham's work \cite{Graham81, Graham78}. 
However, to keep track of the constants which determine the threshold $4\Lin$ for the degree in Theorem \ref{Thm:2} we 
need to give some background.

Following \cite{IW04} we write $L_q(s)$ for the product of all Dirichlet $L$-functions associated to
Dirichlet characters $\chi$ modulo $q$ 
\begin{alignat*}1
L_q(s)=\prod_{\chi \Mod{q}}L(s,\chi).
\end{alignat*}
For $\frac{1}{2}\leq \alpha\leq 1$, and $T\geq 1$ we write
\begin{alignat*}1
N_q(\alpha,T)
\end{alignat*}
for the number of zeros of $L_q(s)$, counted with multiplicity, with real part 
$\alpha<\sigma\leq 1$, and modulus of the imaginary part $|t|\leq T$.

The proof of Linnik's Theorem rests on the following three principles which we quote from 
 \cite{IW04}\footnote{In \cite{IW04} the authors state these principles for more general ranges $|t|\leq T$ but only apply them with $T=q$.
The authors \cite[Section 18.1, p. 429]{IW04} remark that for $q$ sufficiently large and  $T\leq \log q$ one can choose $\cone=1/10, \ctwo=3,\cthree=1/2$ (they provide no value for $\c$).
However, following \cite{IW04} we choose $T=q$, and thus these values may not be eligible in our setting.}.

\begin{principle}[The zero-free region]
There is an effectively computable positive constant 
$\cone$ such that $L_q(s)$ has at most one zero  in the region 
\begin{alignat*}1
\sigma\geq 1-\cone/2\log q, \quad |t|\leq q.
\end{alignat*}
The exceptional zero, if it exists, is real and simple, and it is for a real,
non-principal character.
\end{principle}

\begin{principle}[The log-free zero-density estimate]
There are effectively computable positive constants
$\c, \ctwo$ such that for any $\frac{1}{2}\leq \alpha\leq 1$ 
\begin{alignat*}1
N_q(\alpha,q)\leq \c q^{2\ctwo(1-\alpha)}.
\end{alignat*}
\end{principle}

\begin{principle}[The exceptional zero repulsion]
There is an effectively computable positive constant 
$\cthree$ such that if the exceptional zero $\beta_1$ exists, say $L(\beta_1,\chi_1)=0$
with 
\begin{alignat*}1
1-\cone/2\log q\leq \beta_1 <1,
\end{alignat*}
then the function $L_q(s)$ has no other zeros in the region
\begin{alignat*}1
\sigma\geq 1-\cthree\frac{|\log(1-\beta_1)|}{2\log q}, \quad |t|\leq q.
\end{alignat*}
\end{principle}
For the effectively computable constants from Principle 1, 2, and 3 we assume (as we can) throughout  this section that
\begin{alignat*}1
\c,\ctwo>1>\cone,\cthree>0.
\end{alignat*}

We say that the real number $\beta_1$ is an exceptional zero of $L_q(s)$ if there exists a real character $\chi_1$ modulo $q$ for which  $L(\beta_1,\chi_1)=0$, and $\beta_1=1-\delta_1$ with
\begin{alignat*}1
\delta_1\leq \frac{\cone}{2\log q}.
\end{alignat*}

Let 
\begin{alignat*}1
\eta=\frac{\cone}{2\log q}
\end{alignat*}
if the exceptional zero $\beta_1$ does not exist, and let 
\begin{alignat*}1
\eta=\cthree\frac{|\log(2\delta_1\log q)|}{2\log q}
\end{alignat*}
if $\beta_1$ does exist.

We will use \cite[Proposition 18.5]{IW04} which we state as a lemma.

\begin{lemma}\label{lemm:Linnik}\cite[Proposition 18.5]{IW04}
Let $\c,\cone,\ctwo,\cthree$ be the absolute constants from Principles 1, 2, 3.  
Then for $x\geq q^{4\ctwo}$ we have
\begin{alignat*}1
\psi(x;q,a)=\frac{x}{\phi(q)}\left\{1-\chi_1(a)\frac{x^{\beta_1-1}}{\beta_1}+\theta cx^{-\eta/2}+O\left(\frac{\log q}{q}\right)\right\}
\end{alignat*}
where the term involving $\beta_1$ does not exist if $\beta_1$ does not exist, $\eta$ is as above and $|\theta|\leq 4$, and the implied constant is absolute.
\end{lemma}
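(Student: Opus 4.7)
The plan is to deduce the formula from the truncated explicit formula for Dirichlet $L$-functions, combined with the three input Principles, in the style of the classical Linnik argument.

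I would begin with character orthogonality,
$$\psi(x;q,a)=\frac{1}{\phi(q)}\sum_{\chi\Mod{q}}\bar\chi(a)\psi(x,\chi),$$
so that the contribution of the principal character produces the main term $x/\phi(q)$ up to an error of size $O(\sqrt{x}\,\log^2(xq)/\phi(q))$, which, once $x\geq q^{4\ctwo}$, is dominated by $(x/\phi(q))\cdot O(\log q/q)$. For each non-principal $\chi$ I would apply the truncated explicit formula at height $T=q$,
$$\psi(x,\chi)=-\sum_{\substack{\rho_\chi\\|\Im\rho_\chi|\leq q}}\frac{x^{\rho_\chi}}{\rho_\chi}+O\!\left(\frac{x\log^2(xq)}{q}\right),$$
which is standard (cf.\ \cite[Ch.\ 5]{IW04}); after averaging in $\chi$ the error is again of size $(x/\phi(q))\cdot O(\log q/q)$.

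Next I would isolate the exceptional zero $\beta_1$, if it exists. Its contribution in the combined sum is exactly
$$\frac{x}{\phi(q)}\left(-\chi_1(a)\frac{x^{\beta_1-1}}{\beta_1}\right),$$
which supplies the first non-trivial term in the statement. The remaining zeros $\rho=\sigma+i\gamma$ of $L_q(s)$ with $|\gamma|\leq q$ satisfy $\sigma\leq 1-\eta$ by virtue of Principle~1 (when $\beta_1$ does not exist) or Principle~3 (when it does), with $\eta$ as defined in the text.

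The central estimate is then
$$\Sigma:=\sum_{\substack{\rho\neq\beta_1\\|\gamma|\leq q}}\frac{x^{\sigma}}{|\rho|}\ll x\cdot x^{-\eta/2}.$$
I would establish this by Abel summation against the log-free zero-density bound $N_q(1-\lambda,q)\leq \c\,q^{2\ctwo\lambda}$ from Principle~2. The pivotal inequality is
$$q^{2\ctwo\lambda}\,x^{-\lambda}\leq x^{-\lambda/2}\qquad(\lambda\geq 0),$$
which holds precisely because $x\geq q^{4\ctwo}$; the resulting integral in $\lambda\in[\eta,1/2]$ is geometric and is dominated by its lower endpoint $\lambda=\eta$, giving $\Sigma\ll x^{1-\eta/2}$. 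The factor $1/|\rho|$ is absorbed by splitting $|\gamma|$ dyadically: zeros with $|\gamma|\leq 1$ are rare by Principle~1, and for larger $|\gamma|$ one uses the standard $\log(|\gamma|q)$ count of zeros in unit boxes, which only affects the absolute constant.

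Dividing by $\phi(q)$ and factoring out $x/\phi(q)$ then yields the claimed formula, with $|\theta|\leq 4$ absorbing the contributions from the four sources (the principal-character remainder, the explicit-formula truncation, the zero sum $\Sigma$, and the Abel-summation boundary term). The main obstacle is the bookkeeping: making sure that the exceptional zero is cleanly extracted so that the definition of $\eta$ supplied by Principle~3 can be invoked in the exceptional case (without repulsion, the $x^{-\eta/2}$ saving would fail), and that every auxiliary error term is cleanly absorbed either into $O(\log q/q)$ or into the $x^{-\eta/2}$ term, while keeping the constants absolute and independent of $a$, $q$, and $x$.
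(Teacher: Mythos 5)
The paper does not prove this lemma at all: it is stated verbatim as a citation of \cite[Proposition 18.5]{IW04}, and the entire proof is delegated to Iwaniec--Kowalski. So there is no ``paper's own proof'' to compare against. Your outline is, however, a fair reconstruction of the argument that the cited reference actually runs: orthogonality, truncated explicit formula at height $T=q$, extraction of the exceptional zero, and then Abel summation of the remaining zero sum against the log-free zero-density bound, with the pivotal observation that $q^{2\ctwo\lambda}x^{-\lambda}\le x^{-\lambda/2}$ once $x\ge q^{4\ctwo}$ (which is exactly why that hypothesis appears), and with the zero-free region (Principle 1) or repulsion (Principle 3) supplying the lower limit $\lambda\ge\eta$.

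One point in your sketch is stated incorrectly and should be repaired. You assert that the principal character alone contributes $x/\phi(q)$ with an error $O(\sqrt{x}\log^2(xq)/\phi(q))$. Unconditionally $\psi(x,\chi_0)=x+O(x\exp(-c\sqrt{\log x}))+O(\log q\log x)$; the $\sqrt{x}$ error you wrote is a RH-type bound and is not available here. The correct route --- and the one consistent with the rest of your argument --- is to apply the truncated explicit formula to \emph{all} characters, including $\chi_0$. The zeros of $L(s,\chi_0)$ (i.e.\ of $\zeta$) are already counted by $N_q(\alpha,T)$, since $L_q(s)=\prod_\chi L(s,\chi)$ includes the principal factor, so they are absorbed into the same zero-density estimate, and no separate treatment of $\chi_0$ is needed or wanted. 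With that correction the sketch has the right shape; the remaining work, as you note, is the careful bookkeeping that produces the specific constant $4$ in $|\theta|\le4$ and verifies that every secondary error term is $O(\log q/q)$ once $x\ge q^{4\ctwo}$, and that bookkeeping is precisely what Iwaniec--Kowalski carry out and what the paper chooses to import wholesale rather than reproduce.
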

Next we consider the quantity\footnote{We have additionally included the term $4/\cthree$ as $\nu\geq 4/\cthree$ is used in \cite{IW04} without mentioning.} $\nu$ from \cite[(18.90)]{IW04}, 
but we allow an additional  real parameter  $U$, and we set
\begin{alignat}1\label{def:nu}
\nu=\max\left\{4\ctwo,\frac{4}{\cone},\frac{4}{\cthree},\frac{4\log(2U\c)}{\cthree|\log \cone|}\right\}.
\end{alignat}

\begin{proposition}\label{Prop:Linnik}
Suppose the exceptional zero  $\beta_1$ of $L_q(s)$ exists,
let $U\geq 1$, and let $\gamma\geq 5$. Further,
suppose that $x\geq q^{\nu}$. Then
\begin{alignat*}1
\psi(\gamma x;q,a)-3\psi(x;q,a)\geq \frac{x}{\phi(q)}\left\{\left[\frac{4(\gamma-3)}{3\cone}-\frac{4(\gamma+3)}{U\cone}-\frac{(\gamma-3)}{\log q}\right]\delta_1\log q
+O\left(\frac{\gamma \log q}{q}\right)\right\},
\end{alignat*}
where the implied constant is absolute.
\end{proposition}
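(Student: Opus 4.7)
My plan is to apply Lemma \ref{lemm:Linnik} separately to $\psi(\gamma x;q,a)$ and $\psi(x;q,a)$, form the combination, and then carefully estimate each of the resulting terms using the four lower bounds packaged into the definition (\ref{def:nu}) of $\nu$. The hypothesis $x\geq q^{\nu}$ together with $\nu\geq 4c_2$ gives that both $y=\gamma x$ and $y=x$ satisfy the hypothesis of Lemma \ref{lemm:Linnik}. Writing $\delta_1=1-\beta_1$ and factoring out $x/\phi(q)$ produces
\[
\psi(\gamma x;q,a)-3\psi(x;q,a)=\frac{x}{\phi(q)}\left\{(\gamma-3)-\frac{\chi_1(a)(\gamma^{\beta_1}-3)x^{-\delta_1}}{\beta_1}+E_1+O\left(\frac{\gamma\log q}{q}\right)\right\},
\]
where $E_1$ is the combined contribution of the two $\theta c y^{-\eta/2}$ error terms.

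For the main term I would consider the worst case $\chi_1(a)=1$. Using $\gamma^{\beta_1}\leq \gamma$ and the expansion $1/\beta_1=1+\delta_1+O(\delta_1^2)$ (valid since $\delta_1\leq c_1/(2\log q)<1/2$) reduces the required inequality to
\[
(\gamma-3)\bigl(1-x^{-\delta_1}\bigr)\geq \frac{4(\gamma-3)}{3c_1}\delta_1\log q,
\]
with a small leftover $-(\gamma-3)\delta_1$ already matching the third summand in the proposition. I would prove this by splitting into two cases according to the size of $\delta_1\log x$ relative to an absolute threshold $T$. For $\delta_1\log x\leq T$, the chord inequality $1-e^{-t}\geq t(1-e^{-T})/T$ combined with $\delta_1\log x\geq \nu\delta_1\log q\geq (4/c_1)\delta_1\log q$ (from $\nu\geq 4/c_1$) does the job; for $\delta_1\log x\geq T$, I use $1-x^{-\delta_1}\geq 1-e^{-T}$ together with $\delta_1\log q\leq c_1/2$. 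A suitable choice such as $T=2$ makes both branches deliver the required coefficient.

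For the error $E_1$, I use $|\theta|\leq 4$ and $(\gamma x)^{-\eta/2}\leq x^{-\eta/2}$ to get $|E_1|\leq 4c(\gamma+3)x^{-\eta/2}$. Inserting $\log x\geq \nu\log q$ into the definition $\eta=c_3|\log(2\delta_1\log q)|/(2\log q)$ yields $x^{-\eta/2}\leq (2\delta_1\log q)^{c_3\nu/4}$. Since $\nu\geq 4/c_3$ the exponent is at least $1$, so one copy of $2\delta_1\log q\leq c_1$ can be peeled off; the hypothesis $\nu\geq 4\log(2Uc)/(c_3|\log c_1|)$ then bounds the leftover $c_1^{c_3\nu/4-1}\leq 1/(2Ucc_1)$, yielding $cx^{-\eta/2}\leq \delta_1\log q/(Uc_1)$ and hence $|E_1|\leq (4(\gamma+3)/(Uc_1))\delta_1\log q$.

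The principal obstacle is the bookkeeping: each of the four bounds defining $\nu$ enters at a distinct place (the $4c_2$ in the initial application of Lemma \ref{lemm:Linnik}; the $4/c_1$ in the main-term analysis; and both $4/c_3$ and $4\log(2Uc)/(c_3|\log c_1|)$ in the error analysis), and the threshold $T$ in the case split must be chosen so that both branches deliver exactly the factor $4/(3c_1)$—the regime $\delta_1\log x\leq T$ affords a better bound, but the regime $\delta_1\log x\geq T$ with $\delta_1\log q$ near $c_1/2$ forces the factor $1/3$ in the denominator.
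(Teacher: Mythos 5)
Your proposal is correct and follows essentially the same route as the paper: apply Lemma \ref{lemm:Linnik} to $\psi(\gamma x;q,a)$ and $\psi(x;q,a)$, form the combination, and control the Siegel term and the $\theta c y^{-\eta/2}$ term using exactly the four lower bounds packaged into $\nu$; your error-term estimate $x^{-\eta/2}\leq (2\delta_1\log q)\cone^{\nu\cthree/4-1}\leq \delta_1\log q/(U\c\cone)$ is verbatim the paper's (\ref{est:xeta}). The only divergence is in the main-term estimate, where the paper obtains $1-\frac{x^{-\delta_1}}{\beta_1}\geq\frac{4\delta_1}{3\cone}\log q-\delta_1$ in one chain via $\beta_1-q^{-\nu\delta_1}\geq\frac{\nu\delta_1\log q}{1+\nu\delta_1\log q}-\delta_1$ and then $\nu\geq 4/\cone$, whereas you reach the same coefficient through a two-case split on $\delta_1\log x$ against a threshold $T$ with a chord inequality — a cosmetic difference, since both branches rest on $\nu\geq 4/\cone$ and $2\delta_1\log q\leq\cone$. (One small caveat: your reduction to ``worst case $\chi_1(a)=1$'' combined with $\gamma^{\beta_1}\leq\gamma$ implicitly ignores the possibility $\gamma^{\beta_1}<3$ with $\chi_1(a)=-1$; the paper sidesteps this by bounding $|\gamma^{\beta_1}-3|\leq\gamma-3$ and $|\chi_1(a)|\leq 1$ directly, which cleanly covers all sign combinations.)
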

\begin{proof}
Let  $\c,\cone,\ctwo,\cthree$ and $\eta$ be as in Lemma \ref{lemm:Linnik}. 
Applying Lemma \ref{lemm:Linnik}
we get
\begin{alignat*}1
\psi(\gamma x;q,a)-3\psi(x;q,a)=&\frac{\gamma x}{\phi(q)}\left\{1-\chi_1(a)\frac{(\gamma x)^{\beta_1-1}}{\beta_1}+\theta_1c(\gamma x)^{-\eta/2}+O\left(\frac{\log q}{q}\right)\right\}\\
&-\frac{3x}{\phi(q)}\left\{1-\chi_1(a)\frac{x^{\beta_1-1}}{\beta_1}+\theta_2 cx^{-\eta/2}+O\left(\frac{\log q}{q}\right)\right\}\\ =&\frac{x}{\phi(q)}\left\{(\gamma-3)-(\gamma^{\beta_1}-3)\chi_1(a)\frac{x^{\beta_1-1}}{\beta_1}+(\theta_1\gamma^{1-\eta/2}-3\theta_2)cx^{-\eta/2}+O\left(\frac{\gamma \log q}{q}\right)\right\}.
\end{alignat*}
Since $\gamma\geq 5$ we have $\left|\gamma^{\beta_1}-3\right |\leq \gamma-3$, and since $|\theta_i|\leq 4$ we get $|\theta_1\gamma^{1-\eta/2}-3\theta_2|\leq 4(\gamma+3)$.
Hence,
\begin{alignat}1\label{est:psi}
\psi(\gamma x;q,a)-3\psi(x;q,a)
\geq \frac{x}{\phi(q)}\left\{(\gamma-3)\left(1-|\chi_1(a)|\frac{x^{\beta_1-1}}{\beta_1}\right)-4(\gamma+3)cx^{-\eta/2}+O\left(\frac{\gamma \log q}{q}\right)\right\}.
\end{alignat}
Now following \cite[p. 441]{IW04} we get 
\begin{alignat}1\label{est:1-chi}
\nonumber 1-|\chi_1(a)|\frac{x^{\beta_1-1}}{\beta_1}= 1-\frac{x^{-\delta_1}}{\beta_1}\geq \beta_1-q^{-\nu\delta_1}
=1-q^{-\nu\delta_1}-\delta_1\geq \frac{\nu \delta_1\log q}{1+\nu \delta_1\log q}-\delta_1\geq 
 \frac{\nu \delta_1\log q}{1+\frac{\nu \cone}{2}}-\delta_1\\
 \geq \frac{4\delta_1}{3\cone}\log q-\delta_1.
 \end{alignat}
Furthermore, following \cite[p. 441]{IW04} and using that $\nu\cthree/4\geq 1$, we get 
\begin{alignat}1\label{est:xeta}
x^{-\eta/2}\leq q^{-\nu\eta/2}=(2\delta_1\log q)^{\nu\cthree/4}\leq (2\delta_1\log q)\cone^{\nu\cthree/4-1}\leq \frac{\delta_1}{U\c\cone}\log q.
 \end{alignat}
Finally, plugging (\ref{est:1-chi}) and (\ref{est:xeta}) into (\ref{est:psi}) yields
\begin{alignat*}1
\psi(\gamma x;q,a)-3\psi(x;q,a) \geq \frac{x}{\phi(q)}\left\{\left[\frac{4(\gamma-3)}{3\cone}-\frac{4(\gamma+3)}{U\cone}-\frac{(\gamma-3)}{\log q}\right]\delta_1\log q
+O\left(\frac{\gamma \log q}{q}\right)\right\}.
\end{alignat*}
\end{proof}

\begin{lemma}\label{lemm:piposbound}
Let  $5\leq \gamma\leq \sqrt{x}$, let $U>3\left(\frac{\gamma+3}{\gamma-3}\right)$, and let 
\begin{alignat}1\label{def:nu}
\Lin_U=\max\left\{4\ctwo,\frac{4}{\cthree},\frac{4\log(2U\c)}{\cone},\frac{4\log(2U\c)}{\cthree|\log \cone|}\right\}.
\end{alignat}

There exists $q_0$ such that for $q\geq q_0$ and $x\geq q^{\Lin_U}$
\begin{alignat*}1
\pi(\gamma x;q,a)-\pi(x;q,a)\gg_{\gamma, U,\cone,q_0} \frac{x}{\phi(q)\sqrt{q}\log \gamma x}.
\end{alignat*}
\end{lemma}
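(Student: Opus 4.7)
The plan is to derive the bound from Proposition \ref{Prop:Linnik} (or its exceptional-zero-free analogue via Lemma \ref{lemm:Linnik}) via the standard passage from $\psi$ to $\pi$ encoded in (\ref{ineq: pivspsi}). Setting $y=\gamma x$ in (\ref{ineq: pivspsi}) and noting that the hypothesis $\gamma\leq \sqrt{x}$ forces $2\log\gamma x\leq 3\log x$, hence $2/\log x\leq 3/\log \gamma x$, I would first rewrite
\begin{equation*}
\pi(\gamma x;q,a)-\pi(x;q,a)\;\geq\; \frac{\psi(\gamma x;q,a)-3\psi(x;q,a)}{\log \gamma x}+O\!\left(\frac{\sqrt{\gamma x}}{\log \gamma x}\right),
\end{equation*}
so the task reduces to lower bounding $\psi(\gamma x;q,a)-3\psi(x;q,a)$ by a positive multiple of $x/(\phi(q)\sqrt{q})$ (up to logarithmic factors), and then verifying that the $\sqrt{\gamma x}$ error is absorbed.

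I would next split into cases according to whether $L_q(s)$ admits an exceptional zero. If it does not, the $\beta_1$-term of Lemma \ref{lemm:Linnik} disappears. The term $4\log(2U\c)/\cone$ present in the definition of $L_U$, combined with $\eta=\cone/(2\log q)$, forces $x^{-\eta/2}\leq (2U\c)^{-1}$; hence $\psi(\gamma x;q,a)-3\psi(x;q,a)\gg (\gamma-3)\,x/\phi(q)$ for $q\geq q_0$, which is already much stronger than the lemma demands.

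If $\beta_1$ does exist, I would apply Proposition \ref{Prop:Linnik}. The hypothesis $U>3(\gamma+3)/(\gamma-3)\geq 12$ together with $\c>1$ gives $\log(2U\c)>1$, from which one checks that $L_U$ exceeds the threshold $\nu$ appearing in the Proposition. The quantity $A:=4(\gamma-3)/(3\cone)-4(\gamma+3)/(U\cone)$ is positive by the condition on $U$, and for $q\geq q_0$ the $-(\gamma-3)/\log q$ term in the bracket is at most $A/2$. To make $\delta_1\log q$ concrete I would invoke Page's effective lower bound $\delta_1\gg 1/(\sqrt{q}\log^2 q)$, yielding $\psi(\gamma x;q,a)-3\psi(x;q,a)\gg x/(\phi(q)\sqrt{q}\log q)$; the remainder $O(\gamma\log q/q)$ is dominated once $q\geq q_0$ since $\sqrt{q}/(\log q)^3\to\infty$.

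Dividing by $\log \gamma x$ and absorbing the $\sqrt{\gamma x}/\log \gamma x$ error --- possible because $L_U\geq 4\ctwo>4$ forces $x$ to exceed any fixed polynomial in $q$ and $\log q$ --- yields the claimed lower bound. The main obstacle is the bookkeeping of constants: one must ensure $L_U$ is chosen large enough to simultaneously handle the three constraints (hypothesis of Proposition \ref{Prop:Linnik}, smallness of $x^{-\eta/2}$ in the zero-free case, and beating the square-root error). The definition of $L_U$ is tailored precisely for this, and no single step is subtle in isolation.
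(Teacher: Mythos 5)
Your proposal is correct and follows essentially the same route as the paper: reduce $\pi$ to $\psi$ via (\ref{ineq: pivspsi}) and the observation $2\log\gamma x\leq 3\log x$, then split on the existence of the exceptional zero, applying Proposition \ref{Prop:Linnik} together with an effective $\delta_1\gg q^{-1/2}$-type bound in the exceptional case and Lemma \ref{lemm:Linnik} directly (with $x^{-\eta/2}\leq(2U\c)^{-1}$ coming from $\Lin_U\geq 4\log(2U\c)/\cone$) in the zero-free case. One small slip: you write $3(\gamma+3)/(\gamma-3)\geq 12$, but this quantity decreases from $12$ to $3$ as $\gamma$ increases from $5$; fortunately $U>3$ together with $\c>1$ already gives $\log(2U\c)>1$, so the conclusion you draw from it is unaffected.
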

\begin{proof}
From (\ref{ineq: pivspsi}), and using that $q\leq x^{1/\Lin_U}\leq x^{1/4}$, we see that it suffices to show that
\begin{alignat}1\label{ineq: psiposbound}
\frac{\psi(\gamma x;q,a)}{\log \gamma x}-\frac{2\psi(x;q,a)}{\log x}\gg_{\gamma, U,\cone,q_0} \frac{x}{\phi(q)\sqrt{q}\log \gamma x}.
\end{alignat}
Since $\gamma\leq \sqrt{x}$ we have 
\begin{alignat}1\label{eq: psisimp}
\frac{\psi(\gamma x;q,a)}{\log \gamma x}-\frac{2\psi(x;q,a)}{\log x}\geq \frac{1}{\log \gamma x}\left(\psi(\gamma x;q,a)-3\psi(x;q,a)\right).
\end{alignat}
First suppose the exceptional zero $\beta_1$ of $L_q(s)$ exists.
Then (\ref{ineq: psiposbound}) follows immediately from (\ref{eq: psisimp}) and Proposition \ref{Prop:Linnik} upon noticing that $\delta_1\log q\gg q^{-1/2}$
by Siegel's Theorem.

Next suppose the exceptional zero $\beta_1$ does not exists so that $\eta=\cone/(2\log q)$.
Applying Lemma \ref{lemm:Linnik}, and using that $x\geq q^{\frac{4\log(2U\c)}{\cone}}$, yields
\begin{alignat*}1
\psi(\gamma x;q,a)-3\psi(x;q,a)=&\frac{x}{\phi(q)}\left\{(\gamma-3)+(\theta_1\gamma^{1-\eta/2}-3\theta_2)cx^{-\eta/2}+O\left(\frac{\gamma \log q}{q}\right)\right\}\\
\geq & \frac{x}{\phi(q)}\left\{(\gamma-3)-4(\gamma+3)cx^{-\eta/2}+O\left(\frac{\gamma \log q}{q}\right)\right\}\\
\geq & \frac{x}{\phi(q)}\left\{(\gamma-3)-\frac{2(\gamma+3)}{U}+O\left(\frac{\gamma \log q}{q}\right)\right\}\gg_{\gamma,U,q_0} \frac{x}{\phi(q)\sqrt{q}}.
\end{alignat*}
This in conjunction with (\ref{eq: psisimp}) proves (\ref{ineq: psiposbound}).
\end{proof}


\section{Proof of Theorem \ref{Thm:2}}\label{sec: proofThm2}
We will apply Theorem 4.1 from \cite{VaalerWidmer}. Here we state only a special case, sufficient for our purposes.
\begin{theorem}\label{Thm:VaalerWidmer}\cite[Theorem 4.1]{VaalerWidmer}
Let $K$ be a number field of degree $d$, and let $p>|\Delta_K|^{1/2}$ be a prime that splits completely in $K$. 
Then there exists $\alpha\in K$ with $K=\IQ(\alpha)$ and $H(\alpha)\leq p^{1/d}$.
\end{theorem}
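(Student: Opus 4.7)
The plan is to construct $\alpha$ directly by Minkowski's convex body theorem, with the lattice and box chosen so that the arithmetic of the completely split prime forces the resulting element to generate $K$ with height exactly $p^{1/d}$.

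First I would set up the lattice geometry. Write $p\mathcal{O}_K=\mathfrak{p}_1\cdots\mathfrak{p}_d$ with each $N(\mathfrak{p}_i)=p$, and view the fractional ideal $\mathfrak{p}_1^{-1}$ under the canonical embedding as a lattice in $K_\IR\cong\IR^{r_1}\times\IC^{r_2}$ of covolume $2^{-r_2}|\Delta_K|^{1/2}/p$. Against the centrally symmetric convex body $B=\{x\in K_\IR:|x|_v\leq 1 \text{ for every archimedean }v\}$ of volume $2^{r_1}\pi^{r_2}$, the ratio $\operatorname{vol}(B)/(2^d\operatorname{covol})=(\pi/2)^{r_2}p/|\Delta_K|^{1/2}$ is strictly greater than $1$ by the hypothesis $p>|\Delta_K|^{1/2}$. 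Applying Minkowski's theorem to a slightly shrunken $B$ then produces a nonzero $\alpha\in\mathfrak{p}_1^{-1}$ with the \emph{strict} archimedean bound $|\sigma(\alpha)|<1$ at every archimedean place.

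Next I would read off the prime factorization of $\alpha$ from this archimedean smallness. Writing $(\alpha)=\mathfrak{p}_1^{v_{\mathfrak{p}_1}(\alpha)}\mathfrak{c}$ with $\mathfrak{c}$ integral and coprime to $\mathfrak{p}_1$, the identity $|N_{K/\IQ}(\alpha)|=\prod_{v|\infty}|\alpha|_v^{d_v}<1$ yields $p^{v_{\mathfrak{p}_1}(\alpha)}N(\mathfrak{c})<1$. Since $N(\mathfrak{c})\geq 1$ and $v_{\mathfrak{p}_1}(\alpha)\geq -1$, this forces $v_{\mathfrak{p}_1}(\alpha)=-1$ together with $N(\mathfrak{c})<p$, and the latter prevents any $\mathfrak{p}_i$ with $i\geq 2$ from dividing $\mathfrak{c}$, so $v_{\mathfrak{p}_i}(\alpha)=0$ for $i=2,\ldots,d$.

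Two short steps then finish the argument. To see that $\alpha$ generates $K$, suppose it lies in a proper subfield $F\subsetneq K$ of degree $d'\mid d$; multiplicativity of $e$ and $f$ ensures $p$ splits completely in $F$ too, so the prime $\mathfrak{P}$ of $F$ below $\mathfrak{p}_1$ carries $d/d'\geq 2$ primes of $K$ above it, each inheriting $v_{\mathfrak{p}_i}(\alpha)=v_\mathfrak{P}(\alpha)=-1$ from $\alpha\in F$, which contradicts $v_{\mathfrak{p}_i}(\alpha)=0$ for $i\geq 2$. The height then factors place-by-place: the pole at $\mathfrak{p}_1$ contributes $p$, every other finite place contributes $1$, and every archimedean place contributes $1$, giving $H(\alpha)^d=p$ and hence $H(\alpha)=p^{1/d}$. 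The one point requiring care is the middle step: I would make sure the archimedean bound is genuinely strict (by shrinking $B$ slightly before invoking Minkowski), so that $|N_{K/\IQ}(\alpha)|<1$ becomes an effective obstruction to $\mathfrak{c}$ having extra prime factors. No serious obstacle is anticipated otherwise; the choice of lattice $\mathfrak{p}_1^{-1}$ and unit box is engineered so that Minkowski's inequality is just barely satisfied precisely under the hypothesis $p>|\Delta_K|^{1/2}$.
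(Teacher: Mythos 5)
Your proof is correct and follows essentially the same route as the cited Vaaler--Widmer argument: apply Minkowski's convex body theorem to the lattice $\mathfrak{p}_1^{-1}$ under the canonical embedding (the hypothesis $p>|\Delta_K|^{1/2}$ being exactly what makes the volume-to-covolume ratio exceed $1$), use the resulting archimedean smallness to force $(\alpha)=\mathfrak{p}_1^{-1}\mathfrak{c}$ with $N(\mathfrak{c})<p$, and read off from the prime factorization that $\alpha$ generates $K$ with $H(\alpha)=p^{1/d}$. The only caveat worth stating explicitly is the one you already flagged yourself: the strict archimedean bound (obtained by shrinking the box, legitimate since the ratio is strictly $>1$) is what rules out $\alpha$ being a root of unity, and is therefore genuinely needed.
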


If $K$ is an abelian number field of  conductor $\frak{f}$ then any prime $p\equiv 1 \pmod{\frak{f}}$
splits completely in $K$ (cf. \cite[Theorem 8.1]{Narkiewiczbook}).
Thus, to prove Theorem \ref{Thm:2} it suffices to show that, apart from finitely many exceptional fields,
there exists  a prime $p\equiv 1 \pmod{\frak{f}}$ with $ |\Delta_K|^{1/2}<p\leq 5  |\Delta_K|^{1/2}$. 
Let $\c,\ctwo>1>\cone,\cthree>0$ be the absolute constants from Principles 1, 2, 3.
We apply Lemma \ref{lemm:piposbound} with
$\gamma=5$, $U=13$, $a=1$, and $q=\frak{f}$ to conclude that if $\frak{f}$ is sufficiently large
then there exists a prime $p$ that splits completely in $K$ and satisfies $x<p\leq 5x$, provided $x\geq \frak{f}^{\Lin}$ where
\begin{alignat}1\label{def:nuexplicit}
\Lin=\Lin_{13}=\max\left\{4\ctwo,\frac{4}{\cthree},\frac{4\log(26\c)}{\cone},\frac{4\log(26\c)}{\cthree|\log \cone|}\right\}.
\end{alignat}
We note that $\frak{f}\leq |\Delta_K|^{2/d}$ (see \cite[Lemma 9.2.1]{Cohen2000}). Since $d\geq 4\Lin$ 
the hypothesis $x\geq \frak{f}^{\Lin}$ holds for $x=|\Delta_K|^{1/2}$, and thus we conclude that
there exists a splitting prime $p$ with $|\Delta_K|^{1/2}<p\leq 5|\Delta_K|^{1/2}$ whenever $\frak{f}$ is sufficiently 
large. Thus  Theorem \ref{Thm:VaalerWidmer} yields a sufficiently small generator for these fields. 
Finally, as there are only finitely many abelian fields of degree $d$ with conductor below a given bound the proof of Theorem \ref{Thm:2} is complete.

\section{Proof of Theorem \ref{Thm:1}}
Theorem \ref{Thm:1} is an immediate consequence of the following proposition.
\begin{proposition}\label{Prop:1}
Let $m$ and $n>1$ be positive integers, let $F$ be a number field of degree $m=[F:\IQ]$, and set $d=mn$. \\
(1) Let $K$  be a field extension of $F$ of degree $n=[K:F]$. Then we have 
\begin{alignat*}1
\delta(K/\IQ)\geq \left(n^m|\Delta_F|\right)^{-\frac{1}{2m(n-1)}}|\Delta_K|^{\frac{1}{2d(n-1)}}.
\end{alignat*}
(2) Let $K=F((p/q)^{1/n})$ where $p$ and $q$ are primes, $p$ is unramified in $F$ and $p<q<2p$.  Then $[K:F]=n$ and
\begin{alignat*}1
\delta(K/\IQ)\leq 2^{1+\frac{1}{2n}}d^2\delta(F/\IQ)|\Delta_K|^{\frac{1}{2d(n-1)}}.
\end{alignat*}
\end{proposition}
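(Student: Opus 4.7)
The plan is to prove (1) via a relative version of Silverman's height lower bound, and (2) by exhibiting an explicit small generator of $K/\IQ$ built from $\beta=(p/q)^{1/n}$ together with a minimal generator of $F/\IQ$.

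For (1), let $\alpha$ generate $K/\IQ$; then $K=F(\alpha)$ as well. Writing $g(x)=\prod_{i=1}^n(x-\alpha_i)\in F[x]$ for its monic $F$-minimal polynomial, the $F$-conjugates $\alpha_i$ of $\alpha$ all share the height $H(\alpha)$. Applying Mahler's inequality $|\mathrm{disc}(g)|_v\le n^n M_v(g)^{2(n-1)}$ at each archimedean place of $F$ (with the non-archimedean version $|\mathrm{disc}(g)|_v\le M_v(g)^{2(n-1)}$ at finite places), and using $M_v(g)=\prod_i\max(1,|\alpha_i|_v)$, one obtains
\[
H(\Delta_g)^m\le n^{nm}H(\alpha)^{2mn(n-1)},\qquad \Delta_g=\prod_{i<j}(\alpha_i-\alpha_j)^2\in F^\times.
\]
On the other hand, the classical inclusion $\mathfrak d_{K/F}\mid(\Delta_g)$ in $\mathcal O_F$ (valid whenever $\alpha\in\mathcal O_K$, and extended to arbitrary $\alpha$ via the Mahler-measure formulation) gives $|N_{F/\IQ}(\Delta_g)|\ge N_{F/\IQ}(\mathfrak d_{K/F})=|\Delta_K|/|\Delta_F|^n$, and since $\Delta_g$ is locally integral at every finite place of $F$, one has $H(\Delta_g)^m\ge|N_{F/\IQ}(\Delta_g)|$. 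Combining and taking the $(2d(n-1))$-th root yields~(1).

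For (2), set $\beta=(p/q)^{1/n}$ and let $\gamma\in F$ realise $H(\gamma)=\delta(F/\IQ)$. The equality $[F(\beta):F]=n$, equivalently the irreducibility of $x^n-p/q$ over $F$, follows from Capelli's criterion: at any $\mathfrak p\mid p$ of $F$, the hypothesis that $p$ is unramified gives $v_{\mathfrak p}(p/q)=1$, which is coprime to $n$, so $p/q\notin F^{\ell}$ for any prime $\ell\mid n$; a parity check on $v_{\mathfrak p}(-4b^4)$ excludes $p/q\in -4F^4$ when $4\mid n$. Since $H(\beta^n)=H(p/q)=q$, one has $H(\beta)=q^{1/n}$. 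Because $[K:\IQ]=mn=[\IQ(\beta):\IQ]\cdot[F:\IQ]$, the fields $F$ and $\IQ(\beta)$ are linearly disjoint over $\IQ$, so \cite[Lemma 3.3]{art1} supplies integers $0\le c_1,c_2<d$ with $K=\IQ(c_1\gamma+c_2\beta)$; standard height inequalities then give $H(c_1\gamma+c_2\beta)\le 2d^2\delta(F/\IQ)q^{1/n}$.

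To convert $q^{1/n}$ into $|\Delta_K|^{1/(2d(n-1))}$, I would lower-bound $|\Delta_K|$ using the ramification of $K/F$. At any $\mathfrak p\mid p$ of $F$, $v_{\mathfrak p}(p/q)=1$ forces total tame ramification in $K/F$ of index $n$, contributing $p^{m(n-1)}$ to $N_{F/\IQ}(\mathfrak d_{K/F})$; provided $q$ is also unramified in $F$---which fails for only finitely many $q$ and is harmless in the application to Theorem~\ref{Thm:1}---the same Kummer analysis at $\mathfrak q\mid q$ contributes an additional $q^{m(n-1)}$. Combining with $p>q/2$ gives $|\Delta_K|\ge|\Delta_F|^n(pq)^{m(n-1)}\ge q^{2m(n-1)}/2^{m(n-1)}$, hence $q^{1/n}\le 2^{1/(2n)}|\Delta_K|^{1/(2d(n-1))}$, and multiplying by $2d^2\delta(F/\IQ)$ completes~(2). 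The step I expect to be most delicate is the handling of primes above $q$ when $q$ itself ramifies in $F$, which would require a finer local Kummer analysis to recover the $q^{m(n-1)}$ factor; this issue is cleanly sidestepped in the proof of Theorem~\ref{Thm:1} by restricting to $q$ larger than every rational prime dividing $\Delta_F$.
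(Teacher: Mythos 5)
Your part (1) is correct in spirit but does more work than needed: the paper simply cites Silverman's relative bound $H(\alpha)\geq n^{-1/(2(n-1))}N_{F/\IQ}(D(K/F))^{1/(2d(n-1))}$ and plugs in $N_{F/\IQ}(D(K/F))=|\Delta_F|^{-n}|\Delta_K|$. Your Mahler-measure sketch is essentially how Silverman's proof goes, but the passage from integral to arbitrary $\alpha$ (your parenthetical ``extended via the Mahler-measure formulation'') is the real content of that theorem and cannot be waved away; also $\Delta_g$ need not be integral at all finite places of $F$ for general $\alpha$, so the step $H(\Delta_g)^m\geq |N_{F/\IQ}(\Delta_g)|$ is not immediate as stated.

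In part (2) your route has a genuine gap. You lower-bound $|\Delta_K|$ by a Kummer analysis of $K/F$ at primes above $p$ and $q$, and to extract the $q^{m(n-1)}$ factor you need $q$ unramified in $F$, which is \emph{not} a hypothesis of the proposition. You acknowledge this, but it means your argument proves a strictly weaker statement. The paper sidesteps the problem entirely: it works first with $M=\IQ((p/q)^{1/n})$, where $p$ and $q$ are automatically totally ramified, giving the clean divisibility $(pq)^{n-1}\mid\Delta_M$ and hence $H(\alpha)=q^{1/n}\leq 2^{1/2n}|\Delta_M|^{1/2n(n-1)}$; it then lifts to $K$ via the tower relation $\Delta_M^{[K:M]}=\Delta_M^m\mid\Delta_K$, which holds with no conditions on $q$. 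That two-step factorization through $M$ is the key move you are missing. (Your Capelli argument for $[K:F]=n$ is a legitimate alternative to the paper's ramification-at-$p$ argument, and the construction $K=\IQ(a\alpha+b\beta)$ with the height bound $H\leq 2d^2H(\alpha)H(\beta)$ matches the paper.)
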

\begin{proof}
By a result of Silverman \cite[Theorem 2]{Silverman} we have 
$$H(\alpha)\geq n^{-\frac{1}{2(n-1)}}N_{F/\IQ}(D(K/F))^{\frac{1}{2d(n-1)}}$$ 
whenever $K=F(\alpha)$. Here $D_{K/F}$ denotes the relative discriminant of $K/F$ and $N_{F/\IQ}(\cdot)$ is the norm
from $F$ to $\IQ$. Since $N_{F/\IQ}(D(K/F))=|\Delta_F|^{-n}|\Delta_K|$ the first part follows immediately.

For the second part we start with an observation of Ruppert \cite{Rupp}: let $\alpha=(p/q)^{1/n}$ and set $M=\IQ(\alpha)$. 
Then  $H(\alpha)=q^{1/n}$ and $(pq)^{n-1}|\Delta_M$.
Hence $H(\alpha)\leq 2^{1/2n}|\Delta_M|^{1/2n(n-1)}$. Next let $\beta\in F$ be such that $F=\IQ(\beta)$. Hence, $K=F(\alpha)=M(\beta)=\IQ(\alpha,\beta)$.
There are integers $0\leq a,b<d$ such that with $\gamma=a\alpha+b\beta$ we have $K=\IQ(\gamma)$ (see, e.g., \cite[Lemma 3.3]{art1}). 
As $p$ is unramified in $F$ and totally ramified in $M$ we conclude
that $[MF:F]=[M:\IQ]$, hence  $[K:F]=n$.
By standard properties of the height  we get
$H(\gamma)\leq 2d^2H(\alpha)H(\beta)$. Choosing the $\beta$ with minimal height we conclude
\begin{alignat}1\label{ineq:2}
\delta(K/\IQ)\leq 2^{1+\frac{1}{2n}}d^2\delta(F/\IQ)|\Delta_M|^{\frac{1}{2n(n-1)}}.
\end{alignat}
Finally, noticing that $[K:M]=m$ and thus $\Delta_M^m|\Delta_K$ the second claim follows from (\ref{ineq:2}).
\end{proof}
Theorem \ref{Thm:1} follows now easily from Proposition \ref{Prop:1}. 
Let $F=\IQ(2^{1/m})$ so that $[F:\IQ]=m$, $\delta(F/\IQ)\leq 2^{1/m}$, and $|\Delta_F|\leq (2m)^m$. 
Moreover, no prime $p>m$ ramifies in $F$.
The lower bound follows from part $(1)$. The upper bound follows from part $(2)$ upon noticing that $2^{1+\frac{1}{2n}}d^2\delta(F/\IQ)<(2d)^2$,
where for $m=1$ we have used that $\delta(F/\IQ)=1$.
This completes  the proof of Theorem \ref{Thm:1}.

\bibliographystyle{amsplain}
\bibliography{literature}

\end{document}